\documentclass[graybox]{svmult}

\usepackage{newtxtext}
\usepackage{newtxmath}
\usepackage{makeidx}
\usepackage{graphicx}
\usepackage{multicol}
\usepackage{footmisc}

\usepackage[bookmarks=false]{hyperref}

\usepackage[numbers,sort&compress]{natbib}

\usepackage[utf8]{inputenc}
\usepackage{algorithmic,algorithm}

\usepackage[obeyFinal]{todonotes}

\usepackage{MnSymbol}

\newcommand{\R}{\mathbb{R}}
\newcommand{\C}{\mathbb{C}}
\newcommand{\N}{\mathbb{N}}
\newcommand{\SubA}[1][]{\mathbb{S}_\text{#1}}
\newcommand{\SpDev}[1]{\llangle\cdot,\cdot\rrangle_{#1}}
\newcommand{\Krylov}{\mathcal{K}}
\newcommand{\diag}[1]{\operatorname{diag}\left(#1\right)}

\newcommand{\T}[1]{{T_\textsubscript{#1}}}

\usepackage{xifthen}
\newcommand{\dune}[1][]{
  \textsc{Dune}\ifthenelse{\equal{#1}{}}{}{\textsc{-{#1}}}
}

\definecolor{darkgreen}{RGB}{0,127,0}
\usepackage[final]{listings}
\lstset{language=c++,
  basicstyle=\ttfamily\small,
  numbers=left,
  morecomment=[l][\color{magenta}]{\#},
  emph={[3]Dune,Simd,Vc}, emphstyle={[3]\color{darkgreen}},
  frame=single
}

\usepackage[quotient-mode=fraction]{siunitx}
\DeclareSIUnit\flop{FLOP}
\DeclareSIUnit\byte{byte}
\DeclareSIUnit\double{double}
\DeclareSIUnit\cycle{cycle}

\usepackage{mathtools}
\mathtoolsset{showonlyrefs}

\begin{document}

\title*{Strategies for the vectorized Block Conjugate Gradients method}
\author{Nils-Arne Dreier \and Christian Engwer}
\institute{Nils-Arne Dreier \at University of Münster,
  Einsteinstraße 62, 48149 Münster, \email{n.dreier@uni-muenster.de}
  \and
  Christian Engwer \at University of Münster, Einsteinstraße 62, 48149 Münster, \email{c.engwer@uni-muenster.de}}
\maketitle

\abstract{
  Block Krylov methods have recently gained a lot of attraction. Due
  to their increased arithmetic intensity they offer a promising way
  to improve performance on modern hardware.
  Recently \citeauthor{frommer2017block} presented a block Krylov framework that
  combines the advantages of block Krylov methods and data parallel methods.
  We review this framework and apply it on the Block Conjugate Gradients method,
  to solve linear systems with multiple right hand sides.
  In this course we consider challenges that occur on modern hardware, like a
  limited memory bandwidth, the use of SIMD instructions and the communication
  overhead.
  We present a performance model to predict the efficiency of
  different Block CG variants and compare these with experimental numerical results.
}


\section{Introduction}
Developers of numerical software are facing multiple
challenges on modern HPC-hardware.
Firstly, multiple levels of concurrency must be exploited to achieve
the full performance.
Secondly, due to that parallelism, communication between nodes is needed, which
must be cleverly organized to avoid an expensive overhead.
And most importantly, modern CPUs have a low memory bandwidth, compared to the peak FLOP
rate, such that for standard linear solvers the memory bandwidth is the bottleneck for the
performance. Therefore only algorithms with a high arithmetic intensity will
perform well.

Instruction level parallelism is now apparent on all modern
CPU architectures. They provide dedicated vector (or SIMD) instructions, that allow to proceed multiple floating point operations with one
instruction call,
e.g. AVX-512 allows processing 8 \lstinline{double} at once.
The efficient use of these instructions is a further challenge.

The Conjugate Gradients method (CG) is a standard tool for solving large,
sparse, symmetric, positive definite, linear systems.
The Block Conjugate Gradient (BCG) method was introduced in the 80s to improve the
convergence rate for systems with multiple right hand sides
\cite{o1980block}.
Recently these methods have been rediscovered to reduce the communication overhead
in parallel computations
\cite{grigori2016enlarged,grigori2017reducing,al2017enlarged}.

In this paper we present a generalization of the BCG method, which makes it
applicable to arbitrary many right-hand-sides.
We consider a symmetric, positive definite matrix $A\in\R^{n\times
  n}$ 
and want to solve the matrix equation
\begin{align}
  AX=B, \qquad\text{with }B,X\in \R^{n\times k}.
\end{align}

This paper is structured as follows.
In Section \ref{sec:blockkrylov} we briefly review the theoretical background
of block Krylov methods, using the notation of \cite{frommer2017block}.
Then in Section \ref{sec:blockcg} we apply this theory on the BCG method.
The implementation of the method, a theoretical performance model and some numerical experiments are presented in
Section \ref{sec:impl_and_numeric}.
\section{Block Krylov subspaces}
\label{sec:blockkrylov}
Considering functions of matrices,  \citeauthor{frommer2017block}
presented in \cite{frommer2017block} a generic framework for block
Krylov methods.
Further work on this framework can be found
in the PhD thesis of
\citeauthor{lund2018block}~\cite{lund2018block}.
In the following we review the most important definitions, which we will need in
section \ref{sec:blockcg}.
\citeauthor[]{frommer2017block} used $\C$ as numeric field,
for simplicity of the following numerics, we restrict our self to $\R$.
\begin{definition}[Block Krylov subspace]
  Let $\SubA$ be a *-subalgebra of $\R^{k\times k}$ and $R\in\R^{n\times k}$.
  The $m$-th block Krylov subspace with respect to $A, R$ and $\SubA$ is defined by
  \begin{align}
    \Krylov^m_{\SubA}(A,R) = \left\{ \sum_{i=0}^{m-1} A^iRc_i \,\big|\, c_0,\ldots,c_{m-1}\in\SubA \right\} \subset \R^{n\times k}.
  \end{align}
\end{definition}
From that definition we find the following lemma directly.
\begin{lemma}
  If $\SubA[1]$ and $\SubA[2]$ are two *-subalgebras of $\R^{n \times n}$, with $\SubA[1] \subseteq \SubA[2]$.
  Then
  \begin{align}
    \Krylov^m_{\SubA[1]}(A,R) \subseteq \Krylov^m_{\SubA[2]}(A,R)
  \end{align}
  holds.
\end{lemma}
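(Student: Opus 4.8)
The plan is to unfold the definition of the block Krylov subspace on both sides and to observe that the defining sums for $\Krylov^m_{\SubA[1]}(A,R)$ form, verbatim, a subcollection of those defining $\Krylov^m_{\SubA[2]}(A,R)$. Concretely, I would pick an arbitrary element $Y \in \Krylov^m_{\SubA[1]}(A,R)$ and, by definition, write it as $Y = \sum_{i=0}^{m-1} A^i R c_i$ with coefficient matrices $c_0,\ldots,c_{m-1} \in \SubA[1]$.

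The second step invokes the hypothesis $\SubA[1] \subseteq \SubA[2]$: each $c_i$ then also lies in $\SubA[2]$. Consequently the very same expression $\sum_{i=0}^{m-1} A^i R c_i$ now exhibits $Y$ as an admissible element of $\Krylov^m_{\SubA[2]}(A,R)$. Since $Y$ was arbitrary, this establishes the claimed inclusion.

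I do not expect any genuine obstacle here: the statement is immediate from the definition, and in fact none of the *-algebra structure (closure under addition, multiplication, scalar multiplication, or the involution) is used — mere set-theoretic inclusion of the two coefficient sets suffices, which is why the lemma follows "directly" as the text announces. The only point worth a line of care is that the ambient matrix dimensions are compatible, i.e. that $\SubA[1]$ and $\SubA[2]$ are subalgebras of one and the same $\R^{k\times k}$ and $R\in\R^{n\times k}$, so that every product $A^iRc_i$ is well defined and lives in $\R^{n\times k}$; this is guaranteed by the hypotheses and the setup of the preceding definition.
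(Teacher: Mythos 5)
Your proof is correct and is exactly the argument the paper intends: the paper offers no explicit proof, stating only that the lemma follows ``directly'' from the definition, and your unfolding of the definition plus the observation that $c_i \in \SubA[1] \subseteq \SubA[2]$ is that direct argument. Your remark that no *-algebra structure is actually needed, only set-theoretic inclusion of the coefficient sets, is a fair and accurate observation.
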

In this paper we want to consider the following *-subalgebras and
$\SubA$-products and the corresponding Krylov spaces.

\newpage
\begin{definition}[Relevant *-subalgebras]
  Let $p\in \N$ be a divider of $k$.
  We define the following *-subalgebras and corresponding products:
    \begin{align}
      \text{hybrid: }
      && \SubA[Hy]^p &:= \diag{\left(\R^{p\times p}\right)^q}
      &\Rightarrow&& \SpDev{\SubA[Hy]^p} &= \diag{X_1^*Y_1,\ldots,X_q^*Y_q},\\
      \text{global: }
      && \SubA[Gl]^p &:= \R^{p\times p} \otimes I_q
      &\Rightarrow&& \SpDev{\SubA[Gl]^p} &=\sum_{i=0}^q X_i^*Y_i \otimes I_q,\\
      \intertext{where $I_q$ denotes the $q$ dimensional identity matrix and
      $\diag{\left(\R^{p\times p}\right)^q}$ denotes the set of $k\times k$ matrices
      where only the $p\times p$ diagonal matrices have non-zero values.
      Furthermore we define the special cases
      }
      \text{classical: }
      && \SubA[Cl] &:= \R^{k\times k} = \SubA[Hy]^k = \SubA[Gl]^k &&&\text{and}\\
      \text{parallel: }
      && \SubA[Pl] &:= \diag{\R^k} = \SubA[Hy]^1.
    \end{align}
    The names result from the behavior of the resulting Krylov method; $\SubA[Cl]$
    yields in the classical block Krylov method as presented by
    \citeauthor{o1980block}~\cite{o1980block}, whereas $\SubA[Pl]$ results in a CG
    method, which is carried out for all right hand sides
    simultaneously, in a instruction level parallel fashion.

\end{definition}
From that definition we could conclude the following embedding lemma.
\begin{lemma}[Embeddings of *-subalgebras]
  For $p_1, p_2\in\mathbb{N}$, where $p_1$ is a divisor of $p_2$ and $p_2$ is a
  divisor of $k$
  , we have the following embedding:
  \begin{align}
    \begin{array}{ccccccc}
      \SubA[Pl] & \subseteq & \SubA[Hy]^{p_1} & \subseteq & \SubA[Hy]^{p_2} & \subseteq & \SubA[Cl]\\
      \rotatebox[origin=c]{90}{$\subset$} && \rotatebox[origin=c]{90}{$\subseteq$} && \rotatebox[origin=c]{90}{$\subseteq$} && \rotatebox[origin=c]{90}{$=$}\\
      \SubA[Gl]^{1} & \subseteq & \SubA[Gl]^{p_1} & \subseteq & \SubA[Gl]^{p_2} & \subseteq & \SubA[Cl]
    \end{array}
  \end{align}
\end{lemma}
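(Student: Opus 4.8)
The plan is to verify each inclusion in the diagram directly from the definitions of the *-subalgebras. By transitivity, together with the definitional identities $\SubA[Pl]=\SubA[Hy]^1$, $\SubA[Cl]=\SubA[Hy]^k=\SubA[Gl]^k$ and $\SubA[Gl]^1=\R\,I_k$, it suffices to establish three local statements for an arbitrary chain of divisors $a\mid b\mid k$; writing $q_a=k/a$, $q_b=k/b$ and $r=b/a$ (so that $q_a=r\,q_b$), these are $\SubA[Hy]^a\subseteq\SubA[Hy]^b$, then $\SubA[Gl]^a\subseteq\SubA[Gl]^b$, and finally $\SubA[Gl]^a\subseteq\SubA[Hy]^a$ for each divisor $a$ of $k$. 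The top row of the diagram then follows from the first statement applied along $1\mid p_1\mid p_2\mid k$, the bottom row from the second applied along $1\mid p_1\mid p_2\mid k$, and the vertical inclusions from the third with $a\in\{1,p_1,p_2\}$, the remaining vertical relation being the trivial identity $\SubA[Cl]=\SubA[Cl]$.

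For the first statement, an element of $\SubA[Hy]^a$ is a $k\times k$ matrix that is block-diagonal with respect to the partition of the columns into $q_a$ consecutive groups of size $a$; since $a\mid b$, this partition refines the partition into $q_b$ groups of size $b$, so merging every $r$ consecutive $a$-groups displays the matrix as block-diagonal with $q_b$ blocks of size $b$, each of which is a particular element of $\R^{b\times b}$ --- hence it lies in $\SubA[Hy]^b$. For the second statement I would use $I_{q_a}=I_r\otimes I_{q_b}$ and associativity of $\otimes$ to write $\SubA[Gl]^a=(\R^{a\times a}\otimes I_r)\otimes I_{q_b}$; since each $M\otimes I_r$ with $M\in\R^{a\times a}$ is in particular an $(ar)\times(ar)=b\times b$ matrix, $\R^{a\times a}\otimes I_r\subseteq\R^{b\times b}$, and therefore $\SubA[Gl]^a\subseteq\R^{b\times b}\otimes I_{q_b}=\SubA[Gl]^b$. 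For the third statement, spelling out the Kronecker product in the column layout fixed by the definition shows that a matrix in $\SubA[Gl]^a$ is block-diagonal with $q_a$ diagonal blocks that are all equal to the same $M\in\R^{a\times a}$; in particular it is block-diagonal with $q_a$ blocks of size $a$, i.e.\ it lies in $\SubA[Hy]^a$. Finally, the strictness of $\SubA[Gl]^1\subseteq\SubA[Pl]$ for $k>1$ is immediate from the dimension count $\dim\SubA[Gl]^1=1<k=\dim\SubA[Pl]$.

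All of this is routine linear algebra and I do not expect a genuine obstacle; the one point that requires care is the bookkeeping of how the $k$ columns of the iterates are laid out into $q$ blocks of $p$, since it is exactly this layout that makes the block-diagonal pictures in the first and third statements align and the tensor factors in the second statement re-associate correctly. It is also worth remarking that, by the first lemma of this section, each of the three local statements upgrades verbatim to the corresponding inclusion of block Krylov subspaces, so the whole diagram holds with each *-subalgebra in it replaced by the block Krylov subspace $\Krylov^m_\bullet(A,R)$ it generates.
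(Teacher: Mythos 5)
Your verification is correct and matches the paper's (implicit) approach: the paper states this lemma without proof, as following directly from the definitions, and your three local inclusions along a divisor chain $a\mid b\mid k$ are exactly the routine checks being elided. The one genuine subtlety is the one you flag yourself --- the ordering in $\R^{p\times p}\otimes I_q$ must be read so that the global matrices are block-diagonal with $q$ identical $p\times p$ blocks (i.e.\ as $I_q\otimes M$ in the standard Kronecker convention, consistent with the column layout used elsewhere in the paper), since under the opposite convention the vertical inclusions $\SubA[Gl]^{p}\subseteq\SubA[Hy]^{p}$ would only hold up to a permutation similarity.
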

\section{Block Conjugate Gradient method}
\label{sec:blockcg}
Algorithm \ref{alg:bcg} shows the preconditioned BCG method.
We recompute $\rho^{i-1}$ in line \ref{ali:stablization} to improve the
stability of the method.
A more elaborate discussion of the stability of the BCG method can be found in
the paper of \citeauthor{dubrulle2001retooling}~\cite{dubrulle2001retooling}.
This stabilization has only mild effect on the performance as the
communication that is needed to compute the block product could be carried out
together with the previous block product.

\begin{algorithm}
  \caption{Preconditioned Block Conjugate Gradients method (stablized)}
  \label{alg:bcg}
  \begin{algorithmic}[1]
    \STATE $R^0 \gets B-AX^0$
    \STATE $P^1 \gets M^{-1}R^0$
    \FOR{$i = 1,\ldots$ \TO convergence}
    \STATE $Q^i \gets AP^i$
    \STATE $\alpha^i \gets \llangle P^i, Q^i\rrangle_{\SubA}$
    \STATE $\rho^{i-1} \gets \llangle P^i, R^{i-1} \rrangle_{\SubA}$ \COMMENT{recompute} \label{ali:stablization}
    \STATE $\lambda^i \gets \left(\alpha^i\right)^{-1}\rho^{i-1}$
    \STATE $X^i \gets X^{i-1} + P^i\lambda^i$
    \STATE $R^i \gets R^{i-1} - Q^i\lambda^i$ \label{algl:residual}
    \STATE $Z^{i+1} \gets M^{-1}R^i$
    \STATE $\rho^i \gets \llangle Z^{i+1}, R^i \rrangle_{\SubA}$
    \STATE $\beta^i \gets {\rho^{i-1}}^{-1}\rho^i$
    \STATE $P^{i+1} \gets Z^{i+1} - P^i\beta^i$
    \ENDFOR
  \end{algorithmic}
\end{algorithm}
\noindent The algorithm is build-up from 4 kernels:
\begin{itemize}
\item \texttt{BDOT}: Computing the block product,
  $\gamma \gets \llangle X, Y \rrangle_{\SubA{}}$
\item \texttt{BAXPY}: Generic vector update $X \gets X + Y\gamma$
\item \texttt{BOP}: Applying the operator (or preconditioner) on a block vector $Y \gets AX$
\item \texttt{BSOLVE}: Solve a block system in the *-subalgebra
  $\delta \gets \gamma^{-1}\delta$
\end{itemize}

O'Leary showed the following convergence result to estimate the error of the
classical BCG method.
\begin{theorem}[Convergence of Block Conjugate Gradients {\cite[Theorem 5]{o1980block}}]
  \label{thm:blockcg}
  For the energy-error of the $s$-th column $\|e^i_{s}\|_A$ of the classical BCG method, the following estimation hold:
  \begin{align*}
    \|e^i_s\|_A &\leq c_1 \mu^i\\
    \text{with } \mu = \frac{\sqrt{\kappa_k}-1}{\sqrt{\kappa_k}+1}, \kappa_k &= \frac{\lambda_n}{\lambda_k} \text{ and constant } c_1,
  \end{align*}
  where $\lambda_1\leq \ldots \leq \lambda_N$ denotes the eigenvalues of the
  preconditioned matrix ${M^{-\frac12}AM^{-\frac12}}$.
  The constant $c_1$ depends on $s$ and the initial error $e^0$ but not on~$i$.  
\end{theorem}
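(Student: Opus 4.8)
The plan is to reduce the statement to the classical a-priori CG estimate with spectral deflation, exploiting that for the classical $*$-subalgebra $\SubA[Cl]=\R^{k\times k}$ the BCG iterates minimise the energy error of \emph{each column} over the full block Krylov subspace. First I would remove the preconditioner through the similarity transform $\tilde A=M^{-1/2}AM^{-1/2}$, $\tilde X=M^{1/2}X$, $\tilde B=M^{-1/2}B$: Algorithm~\ref{alg:bcg} with preconditioner $M$ then becomes the unpreconditioned BCG for $\tilde A\tilde X=\tilde B$, the energy errors agree, $\|X^i-X^*\|_A=\|\tilde X^i-\tilde X^*\|_{\tilde A}$, and $\tilde A$ carries the eigenvalues $\lambda_1\le\dots\le\lambda_n$ of the statement, so we may assume $M=I$. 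Writing $e^i=X^i-X^*$ and $R^i=B-AX^i=-Ae^i$, the standard block-Galerkin relations of the algorithm give $\llangle P^j,R^i\rrangle_{\SubA[Cl]}=(P^j)^\top R^i=0$ for $1\le j\le i$; since in exact arithmetic the search directions $P^1,\dots,P^i$ span $\Krylov^i_{\SubA[Cl]}(A,R^0)=\mathcal V_i:=\sum_{l=0}^{i-1}A^l\operatorname{colspan}(R^0)$, every column of $R^i$ is Euclidean-orthogonal to $\mathcal V_i$. As the columns of $e^i-e^0$ lie in $\mathcal V_i$ and $R^i=-Ae^i$, the $s$-th column $e^i_s$ is characterised by $e^i_s-e^0_s\in\mathcal V_i$ and $e^i_s\perp_A\mathcal V_i$, hence
\begin{align*}
  \|e^i_s\|_A=\min_{u\in\mathcal V_i}\|e^0_s-u\|_A ,
\end{align*}
in particular $\|e^i_s\|_A$ is non-increasing and bounded by $\|e^0_s\|_A$.

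Next I would pass to a polynomial problem on a single column's Krylov subspace. The space $\mathcal V_i$ contains $\operatorname{span}\{r^0_s,Ar^0_s,\dots,A^{i-1}r^0_s\}$, and since $r^0_s=-Ae^0_s$ one has $e^0_s-g(A)e^0_s=-\tilde q(A)r^0_s\in\mathcal V_i$ for every polynomial $g$ with $\deg g\le i$, $g(0)=1$ (where $1-g(t)=t\tilde q(t)$). Therefore $\|e^i_s\|_A\le\min\{\|g(A)e^0_s\|_A:\deg g\le i,\ g(0)=1\}$. Expanding $e^0_s=\sum_{j=1}^n\xi_j v_j$ in an orthonormal eigenbasis of $A$ gives $\|g(A)e^0_s\|_A^2=\sum_j\lambda_j\,g(\lambda_j)^2\xi_j^2$, so it remains to exhibit one admissible $g$ that is uniformly small on the spectrum.

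For $i\ge k-1$ I would take $g=\chi\,h$ with the deflation factor $\chi(t)=\prod_{l=1}^{k-1}\frac{\lambda_l-t}{\lambda_l}$ (degree $k-1$, $\chi(0)=1$, vanishing at $\lambda_1,\dots,\lambda_{k-1}$) and $h$ the shifted Chebyshev polynomial of degree $i-k+1$ for $[\lambda_k,\lambda_n]$ normalised by $h(0)=1$. Then $g(0)=1$, $\deg g\le i$, and the terms $j=1,\dots,k-1$ drop out of the sum. For $j\ge k$ one has $|\chi(\lambda_j)|\le C:=\prod_{l=1}^{k-1}\frac{\lambda_n-\lambda_l}{\lambda_l}$ and, by the classical Chebyshev estimate, $|h(\lambda_j)|\le 2\mu^{\,i-k+1}=2\mu^{-(k-1)}\mu^i$ with $\mu=\frac{\sqrt{\kappa_k}-1}{\sqrt{\kappa_k}+1}$, $\kappa_k=\lambda_n/\lambda_k$. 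Hence
\begin{align*}
  \|e^i_s\|_A^2\le\sum_{j\ge k}\lambda_j\,\chi(\lambda_j)^2h(\lambda_j)^2\xi_j^2
  \le\bigl(2C\mu^{-(k-1)}\bigr)^2\mu^{2i}\sum_{j}\lambda_j\xi_j^2
  =\bigl(2C\mu^{-(k-1)}\bigr)^2\mu^{2i}\,\|e^0_s\|_A^2 ,
\end{align*}
which is the claim with $c_1=2C\mu^{-(k-1)}\|e^0_s\|_A$, depending on $s$ and on $e^0$ but not on $i$; the finitely many indices $i<k-1$ are absorbed into $c_1$ via $\|e^i_s\|_A\le\|e^0_s\|_A$.

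The only delicate point is the exact minimisation characterisation in the first step: the block-Galerkin orthogonality combined with the spanning property of the search directions, and the exclusion of rank deficiency/breakdown of the block recursion (also tacit in~\cite{o1980block}); everything after that is the well-known deflated-CG polynomial estimate. It should be noted that this per-column bound only reflects the ``free'' deflation of the $k-1$ smallest eigenvalues and is not meant to capture the full speed-up of the block iteration.
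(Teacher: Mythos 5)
The paper does not actually prove this theorem — it is quoted from \cite{o1980block} — so the comparison is against O'Leary's Theorem~5 rather than an in-paper argument. Your proof does establish the literal inequality $\|e^i_s\|_A\le c_1\mu^i$ (modulo the breakdown/rank-deficiency caveat you flag yourself, and the trivial enlargement $c_1\ge\mu^{-(k-1)}\|e^0_s\|_A$ needed so that the indices $i<k-1$ are really absorbed), but it takes a genuinely different route, and the difference matters. After correctly establishing the one property that is specific to the block method — each column's energy error is minimised over the \emph{sum} $\mathcal V_i$ of all $k$ columns' Krylov spaces — you immediately discard it by restricting to $\operatorname{span}\{r^0_s,Ar^0_s,\dots,A^{i-1}r^0_s\}$ and paying for the deflation of $\lambda_1,\dots,\lambda_{k-1}$ with $k-1$ polynomial degrees. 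The identical argument applies verbatim to single-vector CG, so nothing in your proof distinguishes the block iteration; correspondingly your constant $c_1=2\mu^{-(k-1)}\prod_{l<k}\frac{\lambda_n-\lambda_l}{\lambda_l}\|e^0_s\|_A$ grows exponentially in $k$ and the bound only becomes informative for $i$ well beyond $k$. O'Leary instead keeps $\mathcal V_i$ and uses the cross-column freedom: the candidate correction mixes all $k$ columns of $R^0$ so as to annihilate the components of $e^0_s$ in the invariant subspace belonging to $\lambda_1,\dots,\lambda_{k-1}$ at no cost in polynomial degree — this requires a nondegeneracy assumption on the projection of the initial block onto that subspace, which the restatement above silently drops and which enters her constant — after which a single degree-$i$ Chebyshev polynomial on $[\lambda_k,\lambda_n]$ yields the factor $2\mu^i$ directly. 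What your route buys is an elementary, assumption-free derivation of the stated decay rate; what it loses is the actual content of the theorem, namely that the \emph{block} iteration realises the $\kappa_k$ rate with a moderate constant from the first iterations — which is precisely what the paper relies on when it transfers the result to the hybrid and global variants in the following lemma.
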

This theorem holds for the classical method.
However as the hybrid method is only a data-parallel version of the classical
block method the same convergence rate hold with $k=p$ for the $\SubA[Hy]^p$ method.
The following lemma gives us a convergence rate for the global methods.
\begin{lemma}[Theoretical convergence rate of global methods]
  The theoretical convergence rate of a global method using $\SubA[Gl]^p$ is
  \begin{align}
    \hat{\mu} =  \frac{\hat{\kappa}_p-1}{\hat{\kappa}_p+1},
    \quad\text{with}\quad\hat{\kappa}_p = \frac{\lambda_N}{\lambda_{\left\lceil
        \frac{p}{q} \right\rceil}}
  \end{align}
\end{lemma}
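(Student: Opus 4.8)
The idea is to reduce the global method to the \emph{classical} block method (Theorem~\ref{thm:blockcg}) by a Kronecker lifting. Write $q=k/p$. I would identify $\R^{n\times k}$ with $\R^{nq\times p}$ by grouping, for each $s\in\{1,\dots,q\}$, the $p$ columns of $X\in\R^{n\times k}$ whose index is congruent to $s$ modulo $q$ (ordered by their Kronecker multiplier) into a block $\widehat X_s\in\R^{n\times p}$, and then stacking, $\widetilde X:=\bigl(\widehat X_1^\top,\dots,\widehat X_q^\top\bigr)^\top\in\R^{nq\times p}$. (The precise interleaving has to be chosen to match the Kronecker ordering implicit in $\llangle\cdot,\cdot\rrangle_{\SubA[Gl]^p}$.) Under this identification I would verify the four dictionary entries that matter: the global block product becomes the ordinary Gram matrix of the stacked iterates, $\llangle X,Y\rrangle_{\SubA[Gl]^p}=\bigl(\widetilde X^\top\widetilde Y\bigr)\otimes I_q$; right multiplication of $X$ by $C\otimes I_q\in\SubA[Gl]^p$ becomes right multiplication of $\widetilde X$ by $C\in\R^{p\times p}$; applying $A$ or $M^{-1}$ columnwise to $X$ becomes applying $I_q\otimes A$ (or $I_q\otimes M^{-1}$) to $\widetilde X$; and, since $(C\otimes I_q)^{-1}=C^{-1}\otimes I_q$, the \texttt{BSOLVE} steps carry over verbatim. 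Together these show that Algorithm~\ref{alg:bcg} with $\SubA=\SubA[Gl]^p$ on $AX=B$ with preconditioner $M$ is, line for line, the classical BCG method with block width $p$ applied to $(I_q\otimes A)\widetilde X=\widetilde B$ with preconditioner $I_q\otimes M$.

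\textbf{Invoking Theorem~\ref{thm:blockcg}.} Next I would feed the lifted system into Theorem~\ref{thm:blockcg}. The relevant preconditioned operator is $(I_q\otimes M)^{-1/2}(I_q\otimes A)(I_q\otimes M)^{-1/2}=I_q\otimes\bigl(M^{-1/2}AM^{-1/2}\bigr)$; its spectrum consists of the eigenvalues $\lambda_1\le\dots\le\lambda_N$ of $M^{-1/2}AM^{-1/2}$, each with multiplicity $q$. Hence the sorted eigenvalues $\widetilde\lambda_1\le\dots\le\widetilde\lambda_{Nq}$ of the lifted operator satisfy $\widetilde\lambda_\ell=\lambda_{\lceil\ell/q\rceil}$, so the largest is $\lambda_N$ and the $p$-th smallest is $\lambda_{\lceil p/q\rceil}$; the condition-number-like quantity in Theorem~\ref{thm:blockcg} therefore becomes $\widetilde\lambda_{Nq}/\widetilde\lambda_p=\lambda_N/\lambda_{\lceil p/q\rceil}=\widehat\kappa_p$, and the theorem delivers the claimed rate $\widehat\mu$ for each column of $\widetilde X$. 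Since the $A$-energy error of any column of the original block $X$ is bounded by the $(I_q\otimes A)$-energy error of the corresponding column of $\widetilde X$, every column of the original problem inherits this rate with an $i$-independent constant.

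\textbf{Main obstacle.} The computational content is almost entirely in the first step: pinning down the column interleaving that is consistent with the particular Kronecker convention behind $\SubA[Gl]^p$ and $\llangle\cdot,\cdot\rrangle_{\SubA[Gl]^p}$, and then patiently checking that every single line of Algorithm~\ref{alg:bcg}---the coefficient matrices $\alpha^i,\rho^i\in\R^{p\times p}\otimes I_q$, the \texttt{BAXPY} updates, and the \texttt{BSOLVE}s---respects the map $X\mapsto\widetilde X$. I would also note that the full-column-rank hypothesis implicit in Theorem~\ref{thm:blockcg} transfers unchanged to the lifted iterates, so it need only be assumed once on the original data. After the reduction, the eigenvalue multiplicity count and the substitution $\kappa\mapsto\widehat\kappa_p$ are routine.
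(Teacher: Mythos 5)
Your proposal follows essentially the same route as the paper: the paper's proof likewise observes that the global method is equivalent to applying the classical block method with $p$ right-hand sides to the $qn$-dimensional block-diagonal (Kronecker-lifted) system, notes that the eigenvalues of $A$ each acquire multiplicity $q$ so the $p$-th smallest becomes $\lambda_{\lceil p/q\rceil}$, and then invokes Theorem~\ref{thm:blockcg}. You simply spell out the reshaping isomorphism and the line-by-line correspondence in more detail than the paper does.
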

\begin{proof}
    A global method is equivalent to solve the $qn$-dimensional system
  \begin{align}
    \begin{pmatrix}
      A\\
      & A\\
      && \ddots\\
      &&& A\\
    \end{pmatrix}
    \begin{pmatrix}
      X_1 & \cdots & X_p\\
      X_{p+1} & \cdots & X_{2p}\\
      &\vdots\\
      X_{k-p+1} & \cdots & X_k
    \end{pmatrix}
                           =
    \begin{pmatrix}
      B_1 & \cdots & B_p\\
      B_{p+1} & \cdots & B_{2p}\\
      &\vdots\\
      B_{k-p+1} & \cdots & B_k
    \end{pmatrix}
  \end{align}
  with the classical block Krylov method with $p$ right hand sides.
  The matrix of this system has the same eigenvalues as $A$ but with $q$ times
  the multiplicity.
  Thus the $p$-smallest eigenvalue is $\lambda_{\left\lceil \frac{p}{q} \right\rceil}$.
  Therefore and by applying Theorem \ref{thm:blockcg} we deduce the theoretical
  convergence rate.
\end{proof}
This result makes the global methods irrelevant for practical use.
In particular for $q>1$ the non-global hybrid method would perform better.
\section{Implementation and numerical experiments}
\label{sec:impl_and_numeric}
With the \dune 2.6 release an abstraction for SIMD data types was introduced.
The aim of these abstraction is the possibility to use the data types as a
replacement for the numeric data type, like \lstinline{double} or
\lstinline{float}, to create data parallel methods.
For a more detailed review see \citeauthor{bastian2019dune}~\cite{bastian2019dune}.
Usually these SIMD data types are provided by external libraries like
Vc~\cite{kretz2012vc} or vectorclass~\cite{fog2013c++}, which usually provide
data types with the length of the hardware SIMD (e.g.\ $4$ or $8$).
For problems with more right hand sides we use the \lstinline{LoopSIMD}
type.
This type is basically an array but implements the arithmetic operations by static sized loops.

\begin{lstlisting}[float,caption={Implementation of \texttt{BAXPY}}, label={lst:baxpy}]
void baxpy(scalar_field_type alpha,
           const BlockProduct<scalar_field_type>& gamma,
           const X& x, X& y){
  for(size_t i=0;i<x.size();++i){
    field_type xi = x[i]
    field_type yi = y[i];
    for(size_t r=0;r<P;++r){
      yi += lane(r, xi)*gamma[r];
    }
    y[i] = yi;
  }
}
\end{lstlisting}
Listing \ref{lst:baxpy} shows
the implementation for the \texttt{BAXPY} kernel
for the case $p=k$.

In a first test series we examine the runtime of the kernels \texttt{BDOT},
\texttt{BAXPY} and \texttt{BOP}.
To check the efficiency of our implementation we, take the following performance
model into account.
This performance model is a simplified variant of the ECM model presented by
\citeauthor{hofmann2019bridging}~\cite{hofmann2019bridging}.
We assume that the runtime of a kernel is bounded by the following three
factors:
\begin{itemize}
\item $\T{comp} = \frac{\omega}{\text{peakflops}}$: The time the processor needs
  to perform the necessary number of floating point operations.
  Where $\omega$ is the number of floating point operations of the kernel.
\item $\T{mem} = \frac{\beta}{\text{memory bandwidth}}$: The time to transfer
  the data from the main memory to the L1 cache.
  Where $\beta$ is the amount of data that needs to be transferred in the kernel.
\item $\T{reg} = \frac{\tau}{\text{register bandwidth}}$: The time to transfer
  data between L1 cache and registers.
  Where $\tau$ is the amount of data that needs to be transferred in the kernel.
\end{itemize}
Finally the expected runtime is given by
\begin{align}
  T = \max\left( \T{comp}, \T{mem}, \T{reg} \right).
\end{align}
Table \ref{tbl:kernels} shows an overview of the performance relevant
characteristics of the kernels.
We observe that for the \texttt{BOP} kernel the runtime per
right hand side decreases rapidly for small $k$, this is in accordance with our
expectation. For larger $k$ the runtime per right hand side increases slightly.
We suppose that this effect is due to the fact that for larger $k$ one row of a
block vector occupies more space in the caches, hence fewer rows can be cached.
This effects could be mitigated by using a different sparse matrix format, like
the Sell-C-$\sigma$ format~\cite{Kreutzer_2014}.

Furthermore we see that the runtime of the \texttt{BDOT} and \texttt{BAXPY}
kernels is constant up to an certain $p$ ($p\lesssim 16$).
This is in accordance with our expectation, as it is memory bound in that regime
and $\beta$ does not depend on $p$.
At almost all $p$ the runtimes for global and hybrid version coincide except for
$p=64$.
The reason for that is, that a $64\times 64$ takes $\SI{32}{\kilo\byte}$ memory,
which is exactly the L1 cache size.
In the non-global setting two of these matrices are modified during the
computation, which then exceeds the L1 cache.
This explains as well why the runtime for the $p=128$ case is so much higher
than expected.

Figure \ref{fig:microbenchmarks} shows the measured data compared with the
expected runtimes.
All tests are performed on an Intel Skylake-SP Xeon Gold 6148 on one core.
The theoretical peakflops are $\SI{76.8}{\giga\flop/\second}$, the memory bandwidth is
$\SI{13.345}{\giga\byte/\second}$ and the register bandwidth is $\SI{286.1}{\giga\byte/\second}$.
\begin{table}[t]
  \centering
  \begin{tabular}{r||c|c|c}
    & $\omega$ & $\beta$ & $\tau$\\\hline
    \texttt{BDOT}
    & $2np^2q$ & $2nk$ & $2nqp^2+2nk$ \\
    \texttt{BAXPY}
    & $2np^2q$ & $3nk$ & $nqp^2 + 2nk$\\
    \texttt{BOP}
    & $2kz$ & $2z + 2kn$ & $z(2+2k)$\\
  \end{tabular}
  \caption{Performance relevant characteristics for the \texttt{BDOT} and
    \texttt{BAXPY} kernels.
    Number of floating point operations ($\omega$), amount of data loaded from
    main memory ($\beta$), number of data transfers between registers and
    L1-Cache ($\tau$).
    $z$ is the number of non-zeros in $A$.
  }
  \label{tbl:kernels}
\end{table}
\begin{figure}[t]
  \centering
  \includegraphics[width=0.325\textwidth]{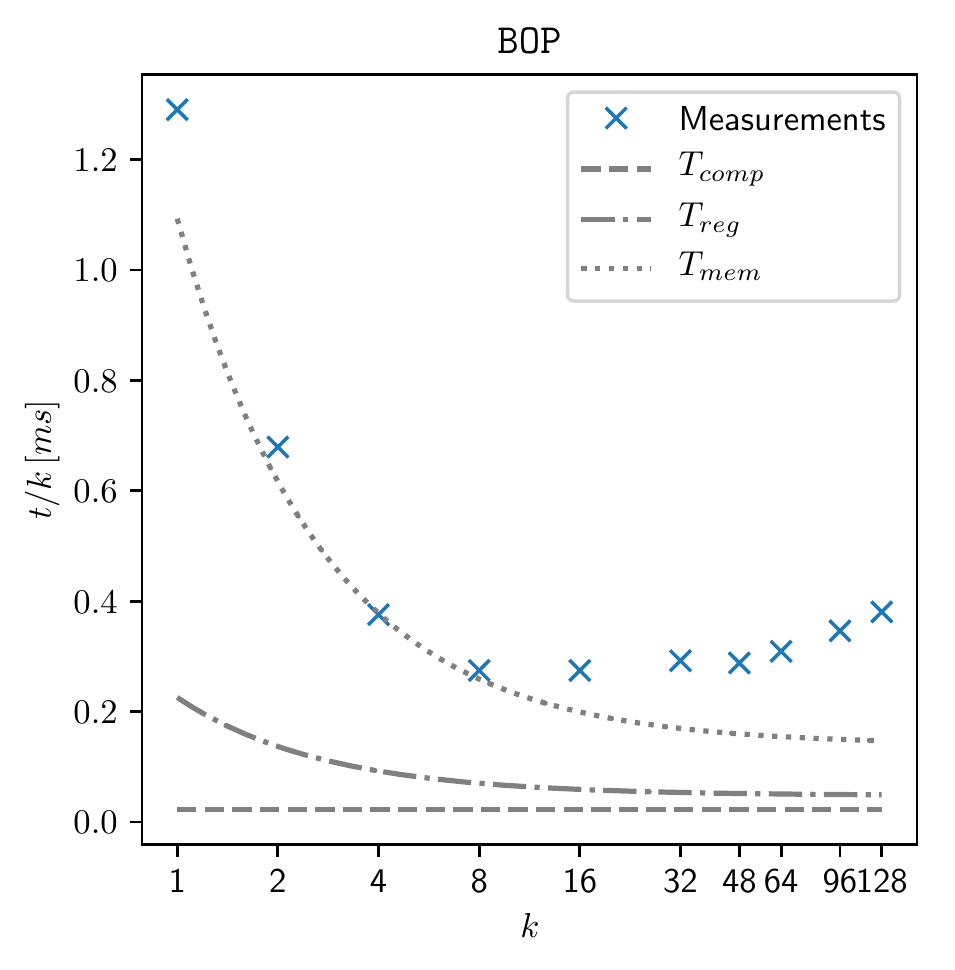}
  \includegraphics[width=0.325\textwidth]{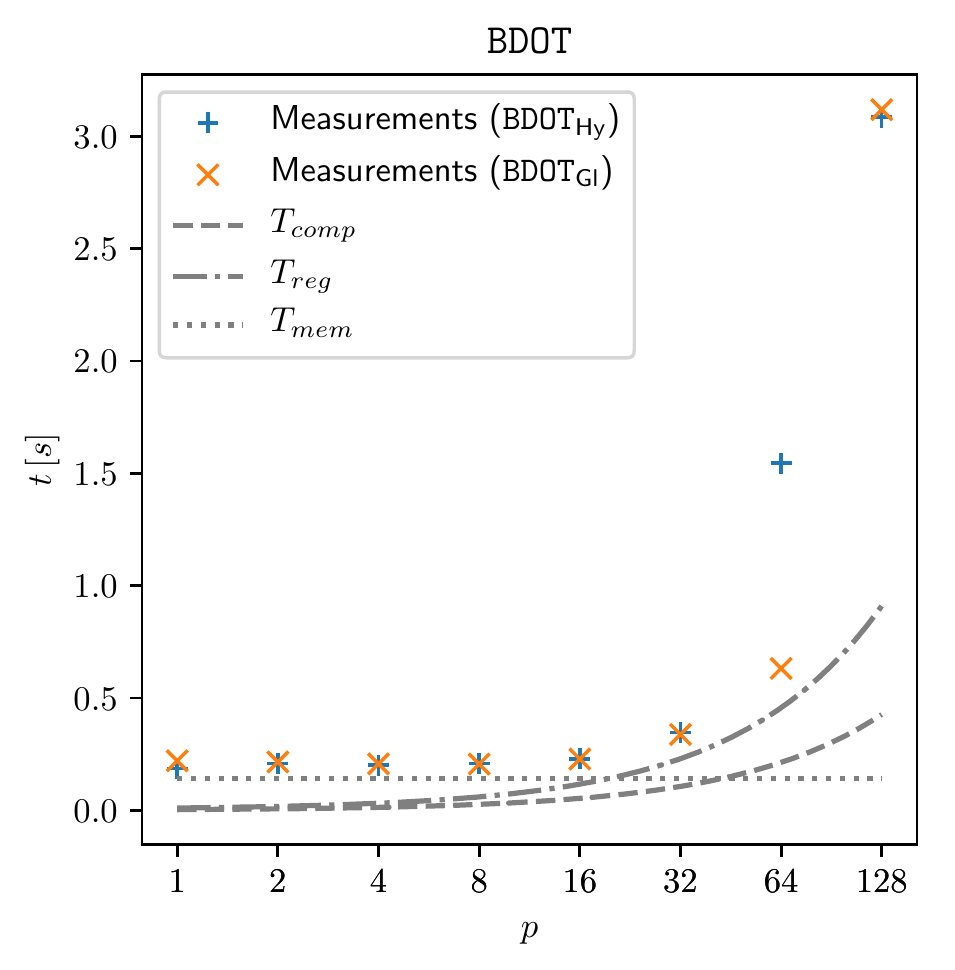}
  \includegraphics[width=0.325\textwidth]{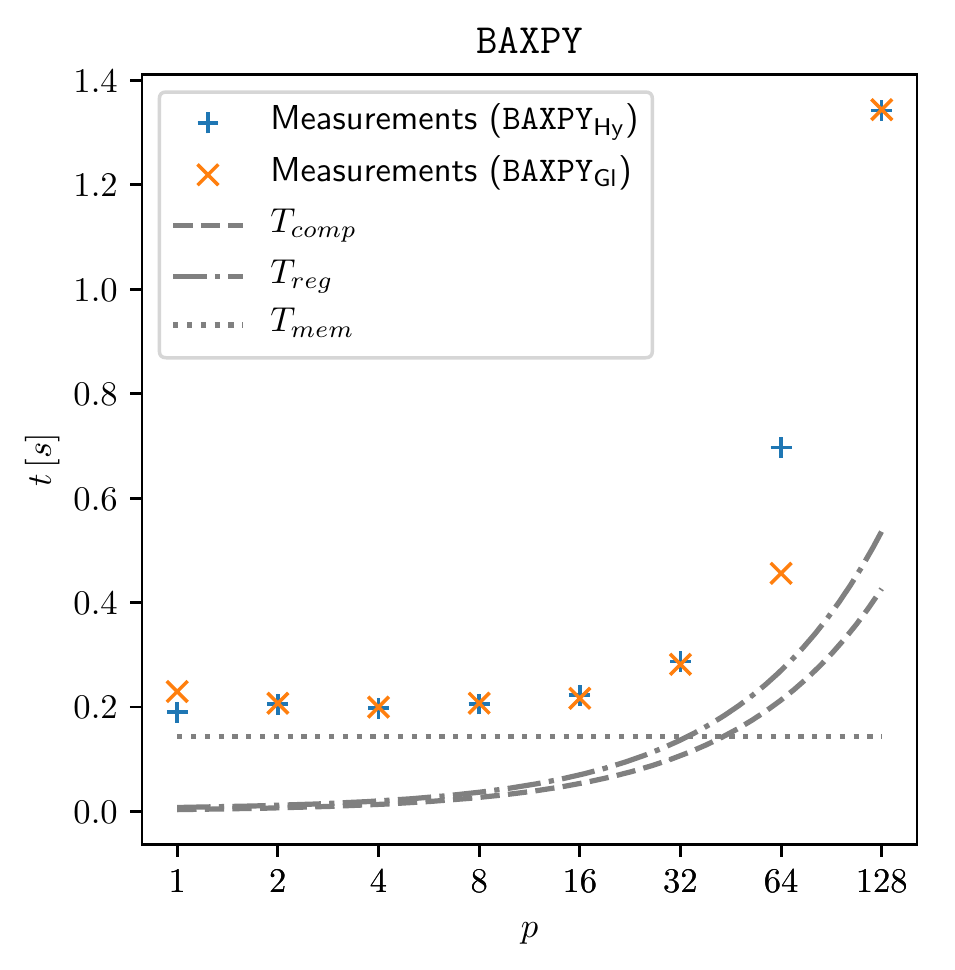}
  \caption{Microbenchmarks for kernels \texttt{BDOT}, \texttt{BAXPY} and \texttt{BOP} using $k=128$}
  \label{fig:microbenchmarks}
\end{figure}

In a second experiment we compare the runtime of the whole algorithm with each
other.
For that we discretized a 2D heterogeneous Poisson problem with a $5$-point Finite Difference
stencil.
The right hand sides are initialized with random numbers.
We iterate until the defect norm of each column has been decreased by a factor
of $10^{-8}$.
An ILU preconditioner was used.
Figure \ref{fig:bcg_comparison} shows the results.
We see that the best block size is $p=16$.
In another test we compare the runtimes for different parameters, where the
algorithm is executed $r$ times until all $128$ right hand sides are
solved.
In this case the $k=16,p=16$ case is the fastest but only slightly slower as the
$k=128,p=16$.
The reason for that is the worse cache behavior of the $\texttt{BOP}$ kernel,
like we have seen before.
Note that on a distributed machine the latter case would need $8$x less
communication.
\begin{figure}[t]
  \centering
  \includegraphics[width=0.325\textwidth]{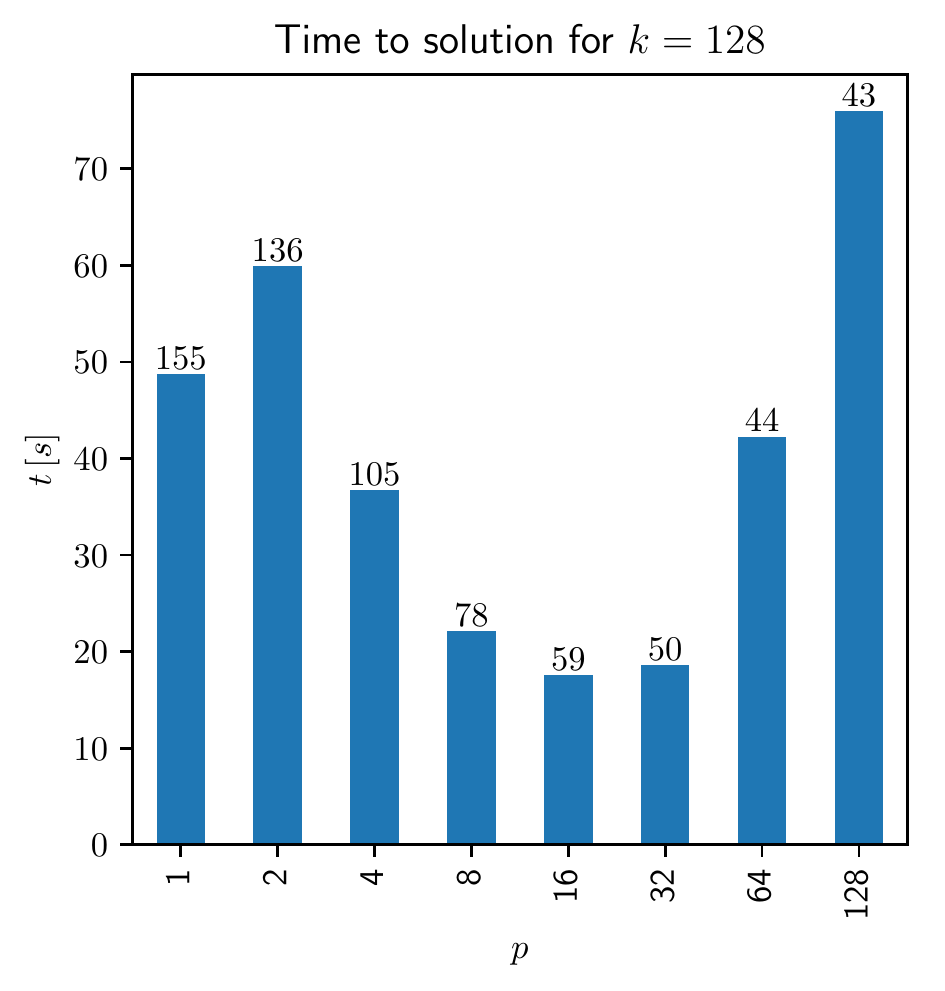}\qquad
  \includegraphics[width=0.325\textwidth]{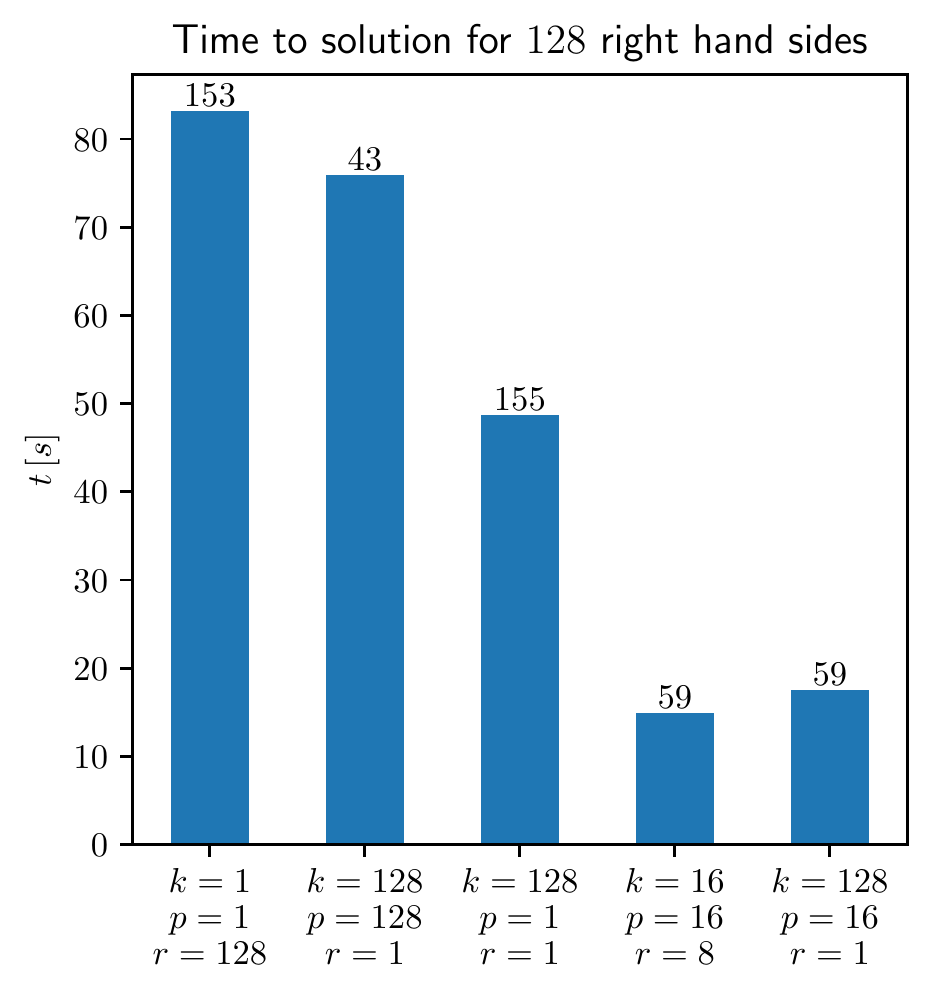}
  \caption{Time to solution for different parameters. Numbers on top of the bars denote the number of iterations. Left: $k=128$ Right: Different configurations: $r=128/k$ is the number of repetitions to solve for all $128$ right hand sides.}
  \label{fig:bcg_comparison}
\end{figure}
\section{Conclusion and outlook}
\label{sec:conclusion}
In this paper we have presented strategies for the vectorized BCG method.
For that we reviewed the block Krylov framework of
\citeauthor{frommer2017block} and apply it on the BCG method.
This makes it possible to use the advantages of the BCG method as far
as it is beneficial, while the number of right hand sides can be further
increased.
This helps to decrease the communication overhead and improve the
arithmetic intensity of the kernels.
We observed that the runtime of the individual kernels scale linearly with the number of right hand
sides as long as they are memory bound ($p \lesssim 16$ on our machine).
That means that it is always beneficial to use at least this block size $p$,
depending on the problem it could also be beneficial to choose even larger $p$.

The found optimizations are also applicable to other block Krylov space methods
like GMRes, MINRES or BiCG, and could be combined with pipelining techniques.
These approaches are the objective of future work.

\sloppy
\bibliographystyle{styles/spmpscinat}
\bibliography{simd_block_cg}
\end{document}